\numberwithin{equation}{section}
\newtheorem{thm}{Theorem}[section]
\newtheorem{pro}[thm]{Proposition}
\newtheorem{lem}[thm]{Lemma}
\newtheorem{cor}[thm]{Corollary}
\theoremstyle{definition}
\theoremstyle{remark}
\newtheorem{claim}[thm]{Claim}
\begin{document}

\title[On homogeneity of Cantor cubes]
{On homogeneity of Cantor cubes}

\author{E. Shchepin}
\address{Steklov Mathematical Institute of Russian Academy of Sciences,
8 Gubkina St. Moscow, 119991, Russia}
\email{scepin@yandex.ru}

\author{V. Valov}
\address{Department of Computer Science and Mathematics,
Nipissing University, 100 College Drive, P.O. Box 5002, North Bay,
ON, P1B 8L7, Canada} \email{veskov@nipissingu.ca}

\thanks{The second author was partially supported by NSERC
Grant 261914-19.}

 \keywords{Candor discontinuum, 0-dimensional spaces, homeomorphisms, nowhere $\tau$-dense sets}

\subjclass{Primary 54C20, 54F45; Secondary 54B10, 54D30}


\begin{abstract}
We discuss the question of extending homeomorphism between closed subsets of the Cantor cube $D^\tau$. It is established that any homeomorphism between two closed negligible subset of $D^\tau$ can be extended to an autohomeomorphism of $D^\tau$.
\end{abstract}

\maketitle




\section{Introduction}

Knaster-Reichbach \cite{kr} established the following theorem (see also \cite{jp} for other types of zero-dimensional separable metric spaces): \textit{Let $X$ and $Y$ be compact, perfect zero-dimensional metric spaces, and let $P$ and $K$ be closed nowhere dense subsets of $X$ and $Y$, respectively. If $f$ is a homeomorphism between $P$ and $K$, then there exists a homeomorphism between $X$ and $Y$ extending $f$}.

If we omit the metrizability condition in the Knaster-Reichbach's theorem, then the conclusion is not anymore true. In order to obtain a correct generalization of the theorem, first of all, it is necessary to find the correct analogue of the condition "nowhere dense". Moreover, the perfectness condition  can be formulated as the nowhere density of the points.

Such an analogue is the following concept of negligibility. A subset of a topological space is called {\em negligible} if it does not contain a non-empty intersection of a family of open sets such that the cardinality of the family is less than the weight of the space.
Note that for  metric compacta the condition of nowhere density is equivalent to the condition of negligibility.

Now we able to present a generalization of the Knaster-Reichbach's theorem.
\begin{thm} \label{analogue} Let $X$ and $Y$ be
compact,  zero-dimensional absolute extensors for 0-dimensional spaces of the same weight with negligible points, and let $P$ and $K$ be closed
negligible subsets of $X$ and $Y$, respectively. If $f$ is a homeomorphism between
$P$ and $K$, then there exists a homeomorphism between
$X$ and $Y$ extending $f$.
\end{thm}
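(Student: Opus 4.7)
The strategy is a transfinite back-and-forth construction, generalizing the Knaster--Reichbach argument from the metric to the Cantor-cube setting. First, by Shchepin's characterization, a compact zero-dimensional AE(0) of weight $\tau$ with negligible points is homeomorphic to the Cantor cube $D^\tau$, so we may assume $X = Y = D^\tau$; fix clopen bases $\{A_\alpha : \alpha < \tau\}$ of $X$ and $\{B_\alpha : \alpha < \tau\}$ of $Y$.

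The main construction builds, by recursion on $\alpha < \tau$, finite clopen partitions $\mathcal U_\alpha$ of $X$ and $\mathcal V_\alpha$ of $Y$ together with bijections $\phi_\alpha : \mathcal U_\alpha \to \mathcal V_\alpha$ satisfying three conditions. First, $\mathcal U_\beta$ refines $\mathcal U_\alpha$ (likewise for $\mathcal V$) and $\phi_\beta$ descends to $\phi_\alpha$ whenever $\alpha < \beta$. Second, the $f$-compatibility $f(U \cap P) = \phi_\alpha(U) \cap K$ holds for every $U \in \mathcal U_\alpha$. Third, a back-and-forth enumeration arranges that each basis element $A_\alpha$ (respectively $B_\alpha$) is eventually a union of cells of some $\mathcal U_{\alpha'}$ (respectively $\mathcal V_{\alpha'}$). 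The key refinement step: given $U \in \mathcal U_\alpha$ to be split by a clopen $A$, the images $f(U \cap A \cap P)$ and $f((U \setminus A) \cap P)$ are disjoint compact subsets of the zero-dimensional compactum $\phi_\alpha(U)$, hence can be separated by a clopen bipartition $\phi_\alpha(U) = V_1 \sqcup V_2$ with the correct traces on $K$, which defines the extension of $\phi_\alpha$. Since each $\mathcal U_\alpha$ is finite by compactness, the recursion needs no limit-stage definitions and simply passes through limit ordinals to the next successor.

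The desired homeomorphism $\varphi : X \to Y$ is then the limit: for $x \in X$, $\varphi(x)$ is the unique point of $\bigcap_{\alpha < \tau} \phi_\alpha(U_\alpha(x))$, where $U_\alpha(x)$ denotes the cell of $\mathcal U_\alpha$ containing $x$. Non-emptiness follows from compactness, uniqueness from the back-and-forth enumeration (which forces $\mathcal V_\alpha$ to separate points of $Y$), and continuity is automatic because the preimage of each $\phi_\alpha(U) \in \mathcal V_\alpha$ equals $U$. The extension condition $\varphi|_P = f$ follows from the $f$-compatibility.

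The main obstacle is pinning down where negligibility of $P$ and $K$ is used: it is the correct transfinite replacement for ``nowhere dense'' and enters to guarantee both that the basis refinements at each of the $\tau$ successor stages do not clash with $f$-compatibility (equivalently, that the traces of $P$ on already-chosen clopens never dictate the whole cell) and that the resulting partitions generate the full topology of $X$ and $Y$ in the limit rather than collapsing to some proper quotient. Verifying this, together with the bookkeeping of interleaving the $\tau$-long enumerations of bases on both sides (routine for regular $\tau$, slightly more delicate otherwise), is the technical heart of the argument.
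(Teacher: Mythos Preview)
Your reduction to $D^\tau$ via Shchepin's characterization is correct and is exactly the paper's first move. But the back-and-forth you propose has a genuine gap at limit stages once $\tau>\aleph_0$. You require that $\mathcal U_\beta$ refine $\mathcal U_\alpha$ for all $\alpha<\beta$ and that every $\mathcal U_\alpha$ be a \emph{finite} clopen partition. These are incompatible past the first limit ordinal: if $\lambda$ is a limit, then whatever partition you write down next must refine every $\mathcal U_\beta$ with $\beta<\lambda$, hence refine their common refinement, which is in general an infinite partition into closed $G_\delta$-sets that are not clopen. Saying the recursion ``passes through limit ordinals to the next successor'' does not help---at that next successor you still owe a finite clopen partition refining infinitely many predecessors, and none exists unless the chain has stabilized, which it cannot since you keep absorbing new basis elements. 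The construction therefore collapses already at stage $\omega+1$; this is the essential obstacle in lifting Knaster--Reichbach beyond the metric case, not a bookkeeping detail.

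The paper's proof resolves this by replacing finite clopen partitions with the fibers of projections $\pi_{A(\alpha)}:D^\tau\to D^{A(\alpha)}$ onto sub-products with $|A(\alpha)|<\tau$; the bijection $\phi_\alpha$ becomes a genuine homeomorphism $\widetilde f_\alpha\in\mathcal H(D^{A(\alpha)})$, and limits are handled by $A(\lambda)=\bigcup_{\beta<\lambda}A(\beta)$. The successor step is arranged so that $A(\alpha+1)\setminus A(\alpha)$ is countable, reducing to an extension problem over a \emph{metric} Cantor factor; this is carried out via a Michael-selection argument (Lemma~4.2) and the fact that $\mathcal H(\mathfrak C^B)$ is an $\mathrm{AE}[0]$ (Theorem~3.4, Corollary~3.5). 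Negligibility enters concretely through Corollary~2.3, which guarantees that the projections of $P$ and $K$ onto the new countable block of coordinates are nowhere dense, so that the classical Knaster--Reichbach theorem applies fiberwise. Your account of where negligibility is used (``traces never dictate the whole cell'', ``partitions generate the full topology'') is too imprecise to stand in for this; in particular, nowhere density alone would suffice for the single-split step you describe, so your sketch does not explain why the stronger hypothesis is needed.
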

This theorem for metric compacta turns into Knaster-Reichbach's theorem, because every metric compact is an absolute extensor in dimension 0.
In general, it is very difficult to avoid the condition of being an absolute extensor in dimension 0, because before you can continue homeomorphisms, you need to be able to continue at least continuous maps.

Since the condition of the negligibility of a point in a compactum is equivalent to having a character at that point equal to the weight of the compactum, then Theorem 1  from \cite{sc} allows us to assert that the compacta $X$ and $Y$ in Theorem 1.1 are homeomorphic to the Cantor cube $D^\tau$, where $\tau$ is the weight of $X$ and $Y$. Therefore, the above theorem can be obtained from the following special case of its own.

\begin{thm}\label{cube}
Let $f$ be a homeomorphism between closed negligible subsets $P$ and $K$  of $D^\tau$,  then $f$ can be extended to a homeomorphism on $D^\tau$.
\end{thm}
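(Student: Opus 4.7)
The plan is to carry out a transfinite back-and-forth construction producing matched sequences of clopen partitions of the two copies $X=Y=D^\tau$. Fix clopen bases $\{B^X_\xi\}_{\xi<\tau}$ and $\{B^Y_\xi\}_{\xi<\tau}$ of the two factors. By recursion on $\xi<\tau$ I build clopen partitions $\mathcal{U}_\xi$ of $X$ and $\mathcal{V}_\xi$ of $Y$, together with bijections $\phi_\xi\colon\mathcal{U}_\xi\to\mathcal{V}_\xi$, satisfying three conditions: (i) for $\eta>\xi$, $\mathcal{U}_\eta$ refines $\mathcal{U}_\xi$ and $\phi_\eta$ refines $\phi_\xi$; (ii) for every $U\in\mathcal{U}_\xi$, $f(P\cap U)=K\cap\phi_\xi(U)$, so that $\phi_\xi$ implements $f$ on the traces in $P$; (iii) an alternating separation condition ensuring that each $B^X_\xi$ and each $B^Y_\xi$ eventually becomes a union of members of some $\mathcal{U}_\eta$, $\mathcal{V}_\eta$. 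At limit ordinals we take the common refinement.

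The core technical step is an analogue of the classical Knaster-Reichbach refinement lemma: given a matched pair $(U,V)$ with $V=\phi_\xi(U)$ and $f(P\cap U)=K\cap V$, together with a clopen $B\subset X$, the set $P\cap U\cap B$ is relatively clopen in $P\cap U$, so its image $K_1:=f(P\cap U\cap B)$ is relatively clopen in $K\cap V$; by zero-dimensionality of $V$ one separates $K_1$ from $(K\cap V)\setminus K_1$ by a clopen partition $V=V_1\sqcup V_2$, while simultaneously splitting $U=(U\cap B)\sqcup(U\setminus B)$. This preserves condition (ii) at the new cut, and the symmetric procedure handles a clopen $B'\subset Y$.

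Negligibility is used in two essential ways. First, in order that the accumulating constraints do not choke off the back-and-forth: after fewer than $\tau$ stages we have imposed fewer than $\tau$ clopen conditions, and for a point $x\in X\setminus P$ yet to be separated from $P$ we must find a clopen neighborhood of $x$ disjoint from $P$. Negligibility of $P$ in $D^\tau$ says exactly that a non-empty intersection of fewer than $\tau$ open sets cannot be contained in $P$, so such a separating clopen set always exists; the dual applies to $K$. Second, for the matching on pieces disjoint from $P$ and $K$, one uses that every non-empty clopen subset of $D^\tau$ is homeomorphic to $D^\tau$, so any piece-to-piece bijection prescribed by the combinatorial data $\phi_\xi$ can in the end be realized by a homeomorphism.

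With the recursion complete, define $F\colon X\to Y$ by $F(x)=$ the unique point of $\bigcap_{\xi<\tau}\phi_\xi(U_\xi(x))$, where $U_\xi(x)\in\mathcal{U}_\xi$ is the piece containing $x$. The base-separation condition (iii) guarantees that these intersections are singletons and that $F$ is a bijection, while (ii) forces $F|_P=f$; continuity of $F$ and $F^{-1}$ is immediate from (i) and (iii). The main difficulty is the simultaneous preservation of the matching condition (ii) through every refinement step, together with the verification that the $<\tau$-sized accumulation of clopen constraints can be resolved at every stage — both of which depend essentially on negligibility.
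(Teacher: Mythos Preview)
Your argument breaks at limit ordinals. The common refinement of infinitely many finite clopen partitions is \emph{not} a clopen partition of $D^\tau$: its pieces are merely closed $G_{<\tau}$-sets, and there are infinitely many of them. After the first limit stage, when you process a basic clopen $B^Y_\xi$, you must split each of these infinitely many pieces $U$ into relatively clopen halves $U_1\sqcup U_2$; but $F^{-1}(B^Y_\xi)=\bigcup_U U_1$ is then just an infinite union of closed sets, and there is no reason for it to be open. The local separation (``by zero-dimensionality of $V$ one separates $K_1$ from $(K\cap V)\setminus K_1$'') is fine inside each piece, but the real problem is choosing all these splittings \emph{coherently} so that they glue to a single global clopen set. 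That is a selection problem, not a zero-dimensional normality problem, and your sketch does not address it; consequently the claim ``continuity of $F$ and $F^{-1}$ is immediate from (i) and (iii)'' is unjustified. (Your description of where negligibility enters is also off: separating $x\notin P$ from $P$ is automatic in a compact zero-dimensional space; negligibility is needed so that every $G_{<\tau}$-piece $V$ satisfies $V\not\subset K$ and is non-degenerate, i.e.\ so that the required splittings on the other side exist.)

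The paper confronts exactly this difficulty by working with the product structure $D^\tau=\mathfrak{C}^A$ and a continuous increasing chain $A(\alpha)\subset A$ with $A(\alpha+1)\setminus A(\alpha)$ countable. The successor step is Lemma~4.2: over the quotient $\mathfrak{C}^{A(\alpha)}$ one applies Michael's zero-dimensional selection theorem to the set-valued map sending $x$ to the set of $h\in\mathcal H(\mathfrak{C})$ extending $f$ on the fiber over $x$ (negligibility, via Corollary~2.3, makes the fibers nowhere dense so Knaster--Reichbach gives each value nonempty). This produces a \emph{continuous} family of fiberwise extensions --- precisely the coherence your piece-by-piece splitting lacks --- and Corollary~3.5 (that $\mathcal H(\mathfrak{C}^A)$ is an absolute extensor for zero-dimensional compacta) then extends from $P_{A(\alpha)}\times\mathfrak{C}$ to all of $\mathfrak{C}^{A(\alpha+1)}$. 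At limits one simply takes unions of coordinate sets, where nothing goes wrong.
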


Since every subset of the weight less than the weight of the entire space is negligible, the following consequence follows from the formulated theorem.

\begin{cor}
If $P,K$ are closed subsets of $D^\tau$ both of weight $<\tau$, then every homeomorphism between $P$ and $K$ can be extended to homeomorphism on $D^\tau$.
\end{cor}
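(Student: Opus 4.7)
The plan is to deduce the corollary directly from Theorem~\ref{cube} by showing that every closed subset $P\subseteq D^\tau$ with $w(P)<\tau$ is automatically negligible in the sense used there. The same reasoning then applies to $K$, and Theorem~\ref{cube} extends the given homeomorphism $f:P\to K$ to an autohomeomorphism of $D^\tau$. Thus the entire task reduces to a single auxiliary claim about weight versus negligibility in $D^\tau$.

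To prove that claim I would argue by contradiction. Suppose there is a nonempty intersection $G=\bigcap_{\alpha<\kappa}U_\alpha$ of open sets of $D^\tau$, with $\kappa<\tau$, contained in $P$. The case $\tau=\aleph_0$ forces $w(P)$ to be finite, hence $P$ finite, and the claim is trivial; so assume $\tau$ is uncountable. Choose $x\in G$ and, using the standard base of $D^\tau$ consisting of clopen cylinders with finite support, shrink each $U_\alpha$ to a basic clopen neighborhood $V_\alpha\ni x$ contained in $U_\alpha$ with finite support $S_\alpha\subseteq\tau$. Then the total support $S:=\bigcup_{\alpha<\kappa}S_\alpha$ satisfies $|S|\leq\kappa\cdot\aleph_0<\tau$, so $|\tau\setminus S|=\tau$.

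The intersection $V:=\bigcap_{\alpha<\kappa}V_\alpha$ depends only on coordinates in $S$, hence is of the form $V'\times D^{\tau\setminus S}$ for some nonempty clopen $V'\subseteq D^S$, which has weight $\tau$. But $V\subseteq G\subseteq P$ exhibits a subspace of $P$ of weight $\tau$, contradicting $w(P)<\tau$. This gives negligibility of $P$, and by symmetry of $K$, closing the argument via Theorem~\ref{cube}. I do not foresee a genuine obstacle: the argument is essentially bookkeeping with the cylinder base. The only delicate point is the cardinal estimate for $|S|$, which is why the trivial case $\tau=\aleph_0$ must be separated out.
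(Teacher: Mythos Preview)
Your argument is correct and follows exactly the route the paper takes: the paper simply asserts that any subset of weight $<\tau$ is negligible and then invokes Theorem~\ref{cube}, while you supply the straightforward details of that assertion. One cosmetic slip: the set $V'=\bigcap_{\alpha<\kappa}V_\alpha'$ need not be clopen when $\kappa$ is infinite, only closed and nonempty---but this is irrelevant, since all you need is that $V'\times D^{\tau\setminus S}$ has weight $\tau$.
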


Actually, we prove a more general version of Theorem \ref{cube}, see Theorem 4.1 below.

\section{Some preliminary results}
Anywhere below, by a homeomorphism we always mean a surjective homeomorphism.
We need a more precise notion of negligibility.
For a space $X$, a subset $P\subset X$ and an infinite cardinal $\lambda$ we denote by $P^{(\lambda)}$ the {\em $\lambda$-interior} of $P$ in $X$, i.e. the set all $x\in P$ such that there exists a $G_\lambda$-subset $K$ of $X$ with $x\in K\subset P$. If $\lambda$ is finite, then $P^{(\lambda)}$ is the ordinary interior of $P$ and it is denoted by $P^{(0)}$.  If there exists $\tau\geq\aleph_0$ with $P^{(\lambda)}$ is empty for all $\lambda<\tau$, we say that $P$ is {\em $\tau$-negligible in $X$}.

Let $X=\prod_{\alpha\in A}X_\alpha$ be a product of spaces and $B\subset A$. If $P\subset X$, then $P_B$ denotes $\pi_B(P)$, where
$\pi_B:X\to\prod_{\alpha\in B}X_\alpha$ is the projection.
\begin{pro}\label{countable}
Let $X=\prod_{\alpha\in A}X_\alpha$ be a product of compact metric spaces, $P$ and $K$ be closed subsets of $X$ and $f:P\to K$ be a homeomorphism. Then for any countable set $C\subset A$ there is a countable set $B\subset A$ and a homeomorphism $f_B:P_B\to K_B$ such that $C\subset B$ and $\pi_B\circ f=f_B\circ\pi_B$.
\end{pro}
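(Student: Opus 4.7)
The plan is to construct the required countable set $B$ via a back-and-forth construction of length $\w$ that simultaneously handles $f$ and $f^{-1}$. Starting from $B_0 = C$, I would inductively build an increasing chain $B_0 \subset B_1 \subset \dots$ of countable subsets of $A$ so that at each step, for every coordinate $\alpha \in B_n$, both $\pi_\alpha \circ f \colon P \to X_\alpha$ and $\pi_\alpha \circ f^{-1} \colon K \to X_\alpha$ factor through the projection onto $B_{n+1}$. Setting $B := \bigcup_{n < \w} B_n$ then yields a countable set containing $C$ with the property that for every $\alpha \in B$ both $\pi_\alpha \circ f$ and $\pi_\alpha \circ f^{-1}$ factor through $\pi_B$.

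The engine of each inductive step is the following classical countable-coordinate-dependence lemma: if $Q \subset X = \prod_{\alpha \in A} X_\alpha$ is closed, $Z$ is a compact metric space, and $g \colon Q \to Z$ is continuous, then there exists a countable $D \subset A$ and a continuous $\tilde g \colon Q_D \to Z$ with $g = \tilde g \circ \pi_D$. To see this, embed $Z$ into the Hilbert cube, view $g$ as a sequence of continuous functions $g_i \colon Q \to [0,1]$, extend each $g_i$ to a continuous $\bar g_i \colon X \to [0,1]$ by Tietze's theorem, and invoke the classical fact that a continuous real-valued map on a product of compact Hausdorff spaces depends on only countably many coordinates. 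The countable union of the resulting coordinate supports is the required $D$. Applying this lemma coordinate-wise to $\pi_\alpha \circ f$ and $\pi_\alpha \circ f^{-1}$ for every $\alpha \in B_n$ produces $B_{n+1}$.

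Given $B$, define $f_B \colon P_B \to K_B$ by $f_B(\pi_B(x)) := \pi_B(f(x))$; this is well-defined because $\pi_B(x) = \pi_B(x')$ forces $\pi_\alpha(f(x)) = \pi_\alpha(f(x'))$ for every $\alpha \in B$ and hence $\pi_B(f(x)) = \pi_B(f(x'))$. By construction $\pi_B \circ f = f_B \circ \pi_B$. Building $g_B \colon K_B \to P_B$ analogously from $f^{-1}$, the identities $f \circ f^{-1} = \id_K$ and $f^{-1} \circ f = \id_P$ yield $f_B \circ g_B = \id$ and $g_B \circ f_B = \id$, so $f_B$ is a bijection. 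Since $\pi_B$ restricted to $P$ is a continuous surjection from the compactum $P$ onto the Hausdorff space $P_B$, it is a quotient map, and the identity $f_B \circ \pi_B = \pi_B \circ f$ therefore forces $f_B$ to be continuous; as a continuous bijection between compact Hausdorff spaces, it is a homeomorphism. The main obstacle is the countable-coordinate-dependence lemma, which is however classical; the rest is a standard closing-off argument.
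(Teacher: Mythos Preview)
Your proof is correct and follows essentially the same approach as the paper's own proof: both build an increasing $\omega$-chain of countable subsets starting at $C$, alternately absorbing the countable coordinate-supports of $f$ and $f^{-1}$ via Mibu's theorem on countable dependence, and then take the union. Your write-up is in fact a bit more careful than the paper's (you spell out the Tietze step needed to apply the dependence lemma on a closed subset, and the quotient-map argument for continuity of $f_B$), but the underlying idea is identical.
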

\begin{proof} Obviously this is true for a countable set $A$, so we suppose $A$ is uncountable. Let $g: K\to P$ be the inverse of $f$.
Using that any continuous function on $X$ depends on countably many coordinates \cite{mi}, we construct by induction a sequence of countable sets $B(n)\subset A$ and maps $f_n:P_{B(n+1)}\to K_{B(n)}$ and $g_n:K_{B(n+1)}\to P_{B(n)}$ such that:
\begin{itemize}
\item $B(1)=C$ and $B(n)\subset B(n+1)$;
\item $\displaystyle\pi_{B(n)}\circ f=f_n\circ\pi_{B(n+1)}$;
\item $\displaystyle\pi_{B(n)}\circ g=g_n\circ\pi_{B(n+1)}$;
\end{itemize}
Then $B=\bigcup_{n=1}^\infty B(n)$ is countable and there exist maps $f_B:P_B\to K_B$ and $g_B:K_B\to P_B$ with
$\displaystyle\pi_B\circ f=f_B\circ\pi_B$ and $\displaystyle\pi_B\circ g=g_B\circ\pi_B$. Then $f_B$ is a homeomorphism between $P_B$ and $K_B$.
\end{proof}
 In the situation of Proposition \ref{countable}, a subset $B\subset A$ is called {\em $f$-admissible} if there exists a homeomorphism $f_B:P_B\to K_B$ with
 $\displaystyle\pi_{B}\circ f=f_B\circ\pi_{B}$. It easily seen that arbitrary union of $f$-admissible sets is also $f$-admissible.

In \cite{m1} $\tau$-negligible sets  were considered  under the name $\widetilde{G}_\tau$-sets.
By \cite[Lemma 6]{m1}, if $X$ is a product of metric compacta, then $F$ is $\tau$-negligible in $X$ if and only if the $\pi$-character $\pi\chi(F,X)$
of $F$ in $X$ is $\geq\tau$.
Recall that $\pi\chi(F,X)$  is the smallest cardinality $\lambda$ such that there is an open family $\mathcal U$  in $X$ of cardinality $\lambda$ with the following property: every neighborhood of $F$ in $X$ contains an element of $\mathcal U$.

Next lemma is a modification of \cite[Theorem 2]{m1}.
\begin{lem}
Let $X=\prod_{\alpha\in A}X_\alpha$ be a product of compact metric spaces and $P$ be a closed set in $X$.
Suppose $C\subset A$ is a set of cardinality $<\tau$ such that $(\{z\}\times X_{A\backslash C})\cap P$ is $\tau$-negligible in $\{z\}\times X_{A\backslash C}$ for every $z\in P_C$. Then $P_{A\backslash C}\neq X_{A\backslash C}$.
\end{lem}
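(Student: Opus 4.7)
The plan is to argue by contradiction: assume $P_{A\setminus C}=X_{A\setminus C}$ and produce, for some $z\in P_C$, a nonempty $G_\lambda$-subset of $\{z\}\times X_{A\setminus C}$ contained in the slice $(\{z\}\times X_{A\setminus C})\cap P$ for some $\lambda<\tau$, contradicting the $\tau$-negligibility hypothesis. (The lemma already fails for $\tau=\aleph_0$, e.g.\ for the diagonal of $[0,1]^2$ with $|C|=1$, so $\tau>\aleph_0$ is used throughout.)

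First I would treat the model case $|C|\le\aleph_0$, where $X_C$ is compact metric with a countable base $\{B_n\}$. The closed sets
$$Q_n:=\pi_{A\setminus C}\bigl(P\cap(\overline{B_n}\times X_{A\setminus C})\bigr)$$
cover $X_{A\setminus C}$: given $y$, some $(z,y)\in P$ has $z\in B_n$, so $y\in Q_n$. By the Baire category theorem applied to the compact Hausdorff space $X_{A\setminus C}$, some $Q_{n_0}$ has nonempty interior. Iterating with bases of $X_C$ of diameter $<1/k$ and applying Baire inside the open set $V_k$, one builds nested nonempty open sets $V_k\subset X_{A\setminus C}$ with $\overline{V_{k+1}}\subset V_k$ and nested compacts $K_k\subset X_C$ of diameter $<1/k$ satisfying $V_k\subset\pi_{A\setminus C}(P\cap(K_k\times X_{A\setminus C}))$. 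Taking intersections, $\{z\}=\bigcap_k K_k$ is a single point by completeness of the metric, and $V:=\bigcap_k V_k=\bigcap_k\overline{V_k}$ is a nonempty $G_\delta$; for each $y\in V$ picking $z_k\in K_k$ with $(z_k,y)\in P$ and passing to the limit $z_k\to z$ gives $\{z\}\times V\subset P$ by closedness of $P$, the desired contradiction.

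For arbitrary $|C|<\tau$ I would run the same scheme by transfinite recursion of length $\sigma:=\max(|C|,\aleph_0)<\tau$, enumerating $C=\{c_\alpha:\alpha<\sigma\}$ and at each successor $\alpha\mapsto\alpha+1$ shrinking $K_\alpha$ along the single factor $X_{c_\alpha}$ using its countable base (so Baire in the open set $V_\alpha$ still applies to a countable cover). The terminal $V=\bigcap_{\alpha<\sigma}V_\alpha$ is a nonempty $G_\sigma$-subset of $X_{A\setminus C}$ contained in the slice over some $z\in P_C$, contradicting $\tau$-negligibility because $\sigma<\tau$. The main obstacle is the limit stage: the set $V_\alpha=\bigcap_{\beta<\alpha}V_\beta$ is nonempty (since $\overline{V_{\beta+1}}\subset V_\beta$ makes the nested closures have the finite intersection property in the compact space $X_{A\setminus C}$), yet it need not have nonempty interior, which blocks the Baire step at $\alpha+1$. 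Following the pattern of \cite[Theorem~2]{m1}, one resolves this by choosing each $V_\beta$ as a cylindrical open subset of $X_{A\setminus C}$ supported on a growing set of coordinates in $A\setminus C$, so that at limits the cylindrical structure persists and a nonempty open ``fibre'' remains from which the construction can continue up to length $\sigma$, at which point a final compactness argument delivers $V\subset(\{z\}\times X_{A\setminus C})\cap P$.
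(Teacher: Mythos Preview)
Your plan is sound: the countable-$C$ Baire argument is correct, and your identification of the $\tau=\aleph_0$ failure and of the limit-stage obstacle in the transfinite version are both accurate. With the cylindrical bookkeeping you sketch (which is indeed the machinery of \cite[Theorem~2]{m1}), the recursion can be pushed through. However, the paper takes a much shorter and conceptually different route. Rather than building a $G_\sigma$-set inside a slice by recursion, the paper assumes $P_{A\setminus C}=X_{A\setminus C}$, passes to a closed subset of $P$ on which $\pi_{A\setminus C}$ is \emph{irreducible} onto $X_{A\setminus C}$, and uses that irreducible maps do not increase $\pi$-character: for any $z\in P_C$ one obtains
\[
\pi\chi\bigl(\pi_{A\setminus C}((\{z\}\times X_{A\setminus C})\cap P),\,X_{A\setminus C}\bigr)\le\pi\chi\bigl((\{z\}\times X_{A\setminus C})\cap P,\,P\bigr)\le\pi\chi(z,P_C)<\tau,
\]
and then invokes the equivalence (\cite[Lemma~6]{m1}) between ``$\pi$-character $\ge\tau$'' and ``$\tau$-negligible'' in products of metric compacta to finish in one line. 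Your approach is more elementary in that it avoids irreducible maps and the $\pi$-character characterisation altogether, at the cost of a longer argument whose limit-stage part is, as you acknowledge, only a sketch. One minor correction: enumerating $C=\{c_\alpha:\alpha<\sigma\}$ and shrinking once along $X_{c_\alpha}$ at step $\alpha$ does not force $\bigcap_\alpha K_\alpha$ to be a singleton; you should enumerate $C\times\omega$ in order type $\sigma$ so that every metric factor is refined infinitely often.
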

\begin{proof}
Indeed, otherwise we may assume that the projection $\pi_{A\backslash C}$ restricted on $P$  is an irreducible map onto $X_{A\backslash C}$, and denote this map by $f$.
Consequently, $$\pi\chi(f((\{z\}\times X_{A\backslash C})\cap P),X_{A\backslash C})\leq\pi\chi((\{z\}\times X_{A\backslash C})\cap P,P)$$
for every $z\in P_C$. On the other hand, $\pi\chi((\{z\}\times X_{A\backslash C})\cap P,P)\leq\pi\chi(z,P_C)<\tau$. So,
$\pi\chi(f((\{z\}\times X_{A\backslash C})\cap P),X_{A\backslash C})<\tau$. That, according to \cite[Lemma 6]{m1}, contradicts the fact that $f((\{z\}\times X_{A\backslash C})\cap P)$ is $\tau$-negligible in $X_{A\backslash C}$.
\end{proof}
\begin{cor}
Let $X=\prod_{\alpha\in A}X_\alpha$ be a product of compact metric spaces and $P$ be a closed $\tau$-negligible set in $X$.
If $C\subset A$ is a set of cardinality $<\tau$,
then there is a set $B\subset A$ containing $C$ such that $B\backslash C$ is countable and $\pi_{B\backslash C}(P)$ is nowhere dense in $X_{B\backslash C}$. If, in addition, $f:P\to K$ is a homeomorphism with $K\subset X$ and $C$ is $f$-admissible, then we can assume that $B$ is also $f$-admissible.
\end{cor}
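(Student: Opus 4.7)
My plan is to iterate the preceding lemma along an increasing $\omega$-chain of countable sets $D_0\subset D_1\subset\cdots$ in $A\setminus C$, so as to upgrade its ``one hole'' conclusion into the ``nowhere dense'' conclusion required here. Starting from $D_0=\emptyset$, at stage $n$ I will fix a countable base $\{W_{n,k}\}_k$ of the metric compactum $X_{D_n}$ and arrange that, for every $k$, the lift of $W_{n,k}$ to $X_{D_{n+1}}$ contains a nonempty open subset disjoint from $\pi_{D_{n+1}}(P)$. Setting $D=\bigcup_n D_n$ and $B=C\cup D$, every nonempty open subset of $X_D$ contains a basic open set depending on finitely many coordinates that lie in some $D_n$, inside which the stage-$(n+1)$ hole for the corresponding $W_{n,k}$ sits; hence $\pi_{B\setminus C}(P)=\pi_D(P)$ is nowhere dense in $X_{B\setminus C}$.

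At the inductive step, for each $k$ form $Q_k:=P\cap\pi_{D_n}^{-1}(\overline{W_{n,k}})$, which is closed and, as a subset of $P$, is $\tau$-negligible in $X$. Put $C'_n:=C\cup D_n$ (of cardinality $<\tau$). The fibres of $Q_k$ over $C'_n$ are $\tau$-negligible in the corresponding slices $\{z\}\times X_{A\setminus C'_n}$: a $G_\lambda$-subset of such a fibre with $\lambda<\tau$, combined with the $G_{|C'_n|\cdot\aleph_0}$-structure of $\{z\}$ inside $X_{C'_n}$, would yield a $G_\mu$-subset of $X$ contained in $P$ with $\mu<\tau$, contradicting $\tau$-negligibility of $P$. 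The preceding lemma therefore applies with $C'_n$ and $Q_k$ in place of $C$ and $P$, producing a nonempty open $V_k\subset X_{F_k}$ for some finite $F_k\subset A\setminus C'_n$ with $\pi_{F_k,A\setminus C'_n}^{-1}(V_k)\cap(Q_k)_{A\setminus C'_n}=\emptyset$. Define $D_{n+1}:=D_n\cup\bigcup_k F_k$ (a countable union of finite sets). Then the open set $\pi_{D_n,D_{n+1}}^{-1}(W_{n,k})\cap\pi_{F_k,D_{n+1}}^{-1}(V_k)\subset X_{D_{n+1}}$ is nonempty, contained in the lift of $W_{n,k}$, and disjoint from $\pi_{D_{n+1}}(P)$, since any $x\in P$ projecting into it would lie in $Q_k$ yet have $\pi_{A\setminus C'_n}(x)\in\pi_{F_k,A\setminus C'_n}^{-1}(V_k)$, contradicting the choice of $V_k$.

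For the $f$-admissibility addendum, I apply Proposition~\ref{countable} to the countable set $D\subset A$ to obtain a countable $f$-admissible $D'\supset D$. Then $B:=C\cup D'$ is a union of two $f$-admissible sets, hence itself $f$-admissible (as remarked just after Proposition~\ref{countable}); $B\setminus C\subset D'$ is countable; and $\pi_{B\setminus C}(P)$ is contained in the cylinder $\pi_D(P)\times X_{(B\setminus C)\setminus D}$, which is nowhere dense in $X_{B\setminus C}=X_D\times X_{(B\setminus C)\setminus D}$ because a cylinder over a nowhere dense factor remains nowhere dense in a product.

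The main obstacle is precisely the gap between the preceding lemma (one hole in the projection of $P$) and the conclusion sought here (a hole inside \emph{every} basic open set of the projected space). The $\omega$-stage iteration over countable bases of the intermediate factor spaces is what bridges this gap. A secondary technical point, easy but essential, is the fibrewise transfer of $\tau$-negligibility noted above, which is what permits the lemma to be re-invoked at each stage with the enlarged parameter $C'_n$ in place of $C$.
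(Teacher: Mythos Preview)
Your argument is correct, but it works harder than necessary, and the paper's proof takes a cleaner route worth knowing.

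The paper does not localize to basic open sets at all. It simply applies Lemma~2.2 repeatedly, each time with the parameter set enlarged by the previously chosen coordinates, to produce an infinite sequence of pairwise disjoint \emph{finite} sets $C_1,C_2,\ldots\subset A\setminus C$ with $\pi_{C_n}(P)\neq X_{C_n}$ for every $n$. Setting $B\setminus C=\bigcup_n C_n$, one has
\[
\pi_{B\setminus C}(P)\ \subset\ \prod_{n\ge 1}\pi_{C_n}(P),
\]
and an infinite product of proper closed subsets of compact factors is automatically nowhere dense (any basic open set restricts only finitely many coordinates, so one can shrink it in a free coordinate $C_m$ to miss $\pi_{C_m}(P)$). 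This avoids the whole apparatus of countable bases $\{W_{n,k}\}$ and the auxiliary sets $Q_k$: one hole per block, over infinitely many disjoint blocks, already gives nowhere density.

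For the $f$-admissible clause the paper again proceeds directly: it fixes once and for all countable $f$-admissible envelopes $B(\alpha)\ni\alpha$, and in the inductive construction replaces each $C_n$ by $C_n'=\bigcup_{\alpha\in C_n}B(\alpha)$; then $B=C\cup\bigcup_n C_n'$ is $f$-admissible as a union of $f$-admissible sets. Your post-hoc enlargement via Proposition~\ref{countable} (choosing $D'\supset D$ and observing that the cylinder over $\pi_D(P)$ stays nowhere dense) is a perfectly valid alternative, arguably more modular since it decouples the nowhere-dense construction from admissibility.

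In short: your iteration over bases is sound but can be replaced by the much shorter observation that disjoint finite holes suffice.
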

\begin{proof}
Since $P$ is a $\tau$-negligible set in $X$, so are the sets $P(z)=(\{z\}\times X_{A\backslash C})\cap P$ for all $z\in P_C$. This implies that each $\pi_{A\backslash C}(P(z))$ is $\tau$-negligible in $X_{A\backslash C}$. Indeed, otherwise $\pi_{A\backslash C}(P(z^*))$ would contain a closed $G_\lambda$-set in $X_{A\backslash C}$ for some $z^*\in P_C$ and $\lambda<\tau$. Because $\{z^*\}\times X_{A\backslash C}$ is $G_\mu$-set in $X$, where $\mu$ is the cardinality of $C$, $P(z^*)$ contains a $G_{\lambda'}$-subset of $X$ with $\lambda'=\max\{\lambda,\mu\}<\tau$, a contradiction. Therefore,
we can apply Lemma 2.2 countably many times to construct by induction a disjoint sequence $\{C_n\}$ of finite subsets of $A\backslash C$ such that
\begin{itemize}
\item $C_1\subset A\backslash C$;
\item $C_{n+1}\subset A\backslash\bigcup_{k\leq n}C\cup C_k$;
\item $\pi_{C_n}(P)\neq X_{C_n}$ for all $n$.
\end{itemize}
Then $B=\bigcup_{n\geq 1}C\cup C_n$ is the required set. If $f:P\to K$ is a homeomorphism and $C$ is $f$-admissible, then for every $\alpha\in A$ fix a countable $f$-admissible set $B(\alpha)$ containing $\alpha$, see Proposition 2.1. Next, using Lemma 2.2 we construct
a disjoint sequence $\{C_n\}$ of finite sets with
\begin{itemize}
\item $C_1\subset A\backslash C$;
\item $C_{n+1}\subset A\backslash\bigcup_{k\leq n}C\cup C_k'$, where $C_k'=\bigcup_{\alpha\in C_k}B(\alpha)$;
\item $\pi_{C_n}(P)\neq X_{C_n}$ for all $n$.
\end{itemize}
Then $B=\bigcup_{n\geq 1}C\cup C_n'$ is $f$-admissible and satisfies the required conditions.
\end{proof}

\section{Homeomorphisms on product spaces}

For any space $X$ let $\mathcal H(X)$ denote the set of all homeomorphisms on $X$ equipped with the compact-open topology. In this section we prove that $\mathcal H(X)$, where $X$ is a product of compact metric spaces, is an absolute extensor for compact 0-dimensional spaces. Our proof uses the technique developed in \cite{m}.

\begin{pro}\label{hom}
Let $K\subset\mathcal H(X)$ be a Lindel$\ddot{o}$f subset, where $X=\prod_{\alpha\in A}X_\alpha$ is a product of compact metric spaces with $|A|=\tau$. Then
$A$ can be covered by a family of sets $\{A(\alpha):\alpha\in\omega(\tau)\}$ such that for every $\alpha$ we have:
\begin{itemize}
\item $A(\alpha)=\bigcup_{\gamma<\alpha}B(\gamma)$ if $\alpha$ is a limit ordinal;
\item $A(\alpha)\subset A(\alpha+1)$ and $A(\alpha+1)\backslash A(\alpha)$ is countable for all $\alpha$;
\item For every $f\in K$ there is $f_\alpha\in\mathcal H(X_{A(\alpha)})$ with $\pi_{A(\alpha)}\circ f=f_\alpha\circ\pi_{A(\alpha)}$.
\end{itemize}
\end{pro}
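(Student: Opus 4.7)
My plan is a transfinite induction of length $\omega(\tau)$ driven by a \emph{uniform factorization lemma} for the Lindel\"of family $K$. As a preliminary reduction, since inversion is a homeomorphism of $\mathcal H(X)$, the set $K^{-1}$ is Lindel\"of, and so is $K\cup K^{-1}$; I replace $K$ by $K\cup K^{-1}$ and then only need to arrange that every $f\in K$ has $\pi_{A(\alpha)}\circ f$ factoring through $\pi_{A(\alpha)}$. The induced $f_\alpha$ is automatically a homeomorphism of $X_{A(\alpha)}$, because applying the construction to $f^{-1}\in K$ produces a two-sided inverse $(f^{-1})_\alpha$.

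The key technical ingredient is the Lemma: \emph{for every $\beta\in A$ there is a countable $E_\beta\subset A$ such that, for every $f\in K$, $\pi_\beta\circ f:X\to X_\beta$ factors through $\pi_{E_\beta}$.} I would prove it by considering the continuous evaluation $e_\beta:\mathcal H(X)\to C(X,X_\beta)$, $f\mapsto\pi_\beta\circ f$, where $C(X,X_\beta)$ carries the compact--open topology (equivalently uniform convergence, since $X$ is compact and $X_\beta$ is metric with metric $d_\beta$). Then $e_\beta(K)$ is Lindel\"of. For a finite $F\subset A$ and $\delta,\epsilon>0$ set
\[ W(F,\delta,\epsilon)=\{g\in C(X,X_\beta): d_F(x,y)\le\delta\Rightarrow d_\beta(g(x),g(y))<\epsilon\} \]
with $d_F(x,y)=\max_{\alpha\in F}d_\alpha(x_\alpha,y_\alpha)$ for fixed metrics $d_\alpha$ on $X_\alpha$. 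A triangle-inequality check gives $W(F,\delta,\epsilon/2)\subseteq\mathrm{int}\,W(F,\delta,\epsilon)$, and uniform continuity on the compact $X$ shows $\bigcup_{F,\delta}W(F,\delta,\epsilon/2)=C(X,X_\beta)$, so the interiors form an open cover. Lindel\"ofness of $e_\beta(K)$ extracts a countable subcover; letting $E_\beta^{(\epsilon)}$ be the union of the corresponding (finite) $F$'s yields a countable set on which every $g\in e_\beta(K)$ admits, at some $n$, the modulus $d_{E_\beta^{(\epsilon)}}(x,y)\le\delta_n\Rightarrow d_\beta(g(x),g(y))<\epsilon$. Taking $E_\beta=\bigcup_{k\ge1}E_\beta^{(1/k)}$ forces $g(x)=g(y)$ whenever $\pi_{E_\beta}(x)=\pi_{E_\beta}(y)$, i.e.\ factorization through $\pi_{E_\beta}$.

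Armed with the $\{E_\beta\}_{\beta\in A}$, I enumerate $A=\{a_\xi:\xi<\omega(\tau)\}$ and build $A(\alpha)$ by transfinite recursion: $A(0)=\emptyset$; at limits $A(\alpha)=\bigcup_{\gamma<\alpha}A(\gamma)$; at a successor $\alpha=\beta+1$, start from $A_0=A(\beta)\cup\{a_\beta\}$, iterate $A_{n+1}=A_n\cup\bigcup_{\gamma\in A_n\setminus A(\beta)}E_\gamma$, and put $A(\beta+1)=\bigcup_nA_n$. Each $A_n\setminus A(\beta)$ stays countable, so $A(\beta+1)\setminus A(\beta)$ is countable. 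For any $f\in K$ and $\gamma\in A(\beta+1)$, $\pi_\gamma\circ f$ factors through $\pi_{A(\beta+1)}$: if $\gamma\in A(\beta)$ this is the inductive hypothesis composed with the projection $X_{A(\beta+1)}\to X_{A(\beta)}$, while if $\gamma\in A_n\setminus A(\beta)$ then $E_\gamma\subset A_{n+1}\subset A(\beta+1)$, so $\pi_\gamma\circ f$ factors through $\pi_{E_\gamma}$ and hence through $\pi_{A(\beta+1)}$. The limit step is automatic because the $f_\gamma$, $\gamma<\alpha$, are compatible under the projections $X_{A(\gamma')}\to X_{A(\gamma)}$ and assemble into $f_\alpha\in\mathcal H(X_{A(\alpha)})$. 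Since $a_\beta\in A(\beta+1)$ for every $\beta<\omega(\tau)$, the family $\{A(\alpha)\}$ covers $A$.

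The main obstacle is the uniform factorization Lemma. The single-function statement from \cite{mi} supplies, for each $f$, only \emph{some} countable coordinate set on which $\pi_\beta\circ f$ depends, and the union of these sets over $f\in K$ can well be uncountable. The decisive move is to reformulate the closed condition ``$g$ factors through $\pi_E$'' as a union of open uniform-modulus neighborhoods; at that point Lindel\"ofness of $K$ passes through $e_\beta$ and collapses everything to a single countable $E_\beta$ that works for every $f\in K$ simultaneously.
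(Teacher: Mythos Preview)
Your proof is correct, and its overall architecture---prove a \emph{uniform} factorization lemma for the Lindel\"of family $K$, close up by an $\omega$-iteration, then run a transfinite induction of length $\omega(\tau)$ to cover $A$---is the same as the paper's. The difference is in the device used for the factorization lemma. The paper works with a countable target $B$ rather than a single coordinate $\beta$: it covers the Lindel\"of image $p_B(K)\subset C(X,X_B)$ by countably many balls $B_*(\pi_B\circ f_{ni},\mathcal U_n)$ of mesh $<1/n$, invokes the single-function result of \cite{mi} to factor each specific $f_{ni}$ (and $f_{ni}^{-1}$) through some countable coordinate set, takes the union $\Gamma(B)$ of those sets, and then argues by uniform convergence that every $\pi_B\circ f$ factors through $\pi_{\Gamma(B)}$; a further iteration $B(n)=\Gamma(B(n-1))$ produces the closed set $\Lambda(B)=\bigcup_n B(n)$, which plays the role of your $\bigcup_n A_n$. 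Your route with the modulus sets $W(F,\delta,\epsilon)$ is a bit more self-contained: the triangle-inequality check $W(F,\delta,\epsilon/2)\subset\mathrm{int}\,W(F,\delta,\epsilon)$ is the only analytic input, so you never need to call \cite{mi} or invoke the limitation topology, and the upfront symmetrization $K\leadsto K\cup K^{-1}$ cleanly avoids the paper's parallel bookkeeping for $f$ and $f^{-1}$. Both arguments are really the same Lindel\"of-to-countable reduction, executed with slightly different open covers.
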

\begin{proof}
In case $A$ is countable we take $A(\alpha)=A$ for all $\alpha$. Suppose $A$ is uncountable and let $B\subset A$ be a countable set. Take a sequence of open covers $\{\mathcal U_n\}_{n\geq 1}$ of $X_B$ such that $\rm{diam}(\mathcal U_n)<1/n$ for all $n$. Since $X_B$ is metrizable and $X$ is compact, the compact-open topology on the function space $C(X,X_B)$ coincides with the limitation topology \cite{to}. Recall that \cite{bo1}
$U\subset C(X,X_B)$ is open with respect to the limitation topology if for every $f\in U$ there is $\mathcal V\in\rm{cov}(X_B)$  such that $U$ contains the set
$B(f,\mathcal V)=\{g\in C(X,X_B):g{~}\mbox{is}{~}\mathcal V-\mbox{close to}{~}f\}.$ Here, $\rm{cov}(X_B)$ is the family of all open covers of $X_B$ and $g$ is $\mathcal V$-close $f$ provided for any $x\in X$ there is $V\in\mathcal V$ containing both points $f(x)$ and $g(x)$. In particular, every $B(f,\mathcal V)$ contains a neighborhood $B_*(f,\mathcal V)$ of $f$. There exists a natural map $p_B:\mathcal H(X)\to C(X,X_B)$,
$p_B(h)=\pi_B\circ h$, which is continuous when both $\mathcal H(X)$ and $C(X,X_B)$ carry the compact-open topology.

\begin{claim}\label{1}
There is a countable set $\Gamma(B)\subset A$ containing $B$ such that for every $f\in K$ there exist
$f_{\Gamma(B)},g_{\Gamma(B)}\in C(X_{\Gamma(B)},X_B)$ with $\pi_{B}\circ f=f_{\Gamma(B)}\circ\pi_{\Gamma(B)}$ and
$\pi_{B}\circ f^{-1}=g_{\Gamma(B)}\circ\pi_{\Gamma(B)}$.
\end{claim}
Since for each $n$ the family $\{B_*(\pi_{B}\circ f,\mathcal U_n):f\in K\}$ is an open cover of $p_B(K)$, there is a sequence $\{f_{ni}\}_{i\geq 1}\subset\mathcal H(X)$ such that
$\{B_*(\pi_{B}\circ f_{ni},\mathcal U_n):i\geq 1\}$ covers $p_B(K)$. Similarly, there exists sequences $\{g_{ni}\}_{i\geq 1}\subset\mathcal H(X)$, $n\geq 1$, such that
$\{B_*(\pi_{B}\circ g_{ni},\mathcal U_n):i\geq 1\}$ covers $p_B(K^{-1})$, where $K^{-1}=\{f^{-1}:f\in K\}$.
As in the proof of Proposition \ref{countable}, there is a countable set $\Gamma(B)$ containing $B$ and corresponding to each $n$ sequences $\{\varphi_{ni}\}_{i\geq 1}\subset C(X_{\Gamma(B)},X_B)$ and $\{\phi_{ni}\}_{i\geq 1}\subset C(X_{\Gamma(B)},X_B)$
such that $\pi_{B}\circ f_{ni}=\varphi_{ni}\circ\pi_{\Gamma(B)}$ and $\pi_{B}\circ g_{ni}=\phi_{ni}\circ\pi_{\Gamma(B)}$
for all $n,i$. Then for every $f\in K$ there exist two sequences
$\displaystyle\{\varphi_{ni_n}\}_{n\geq 1}\subset C(X_{\Gamma(B)},X_B)$ and $\displaystyle\{\phi_{ni_n}\}_{n\geq 1}\subset C(X_{\Gamma(B)},X_B)$
such that $\pi_{B}\circ f$ is $\mathcal U_n$-close to $\displaystyle\varphi_{ni_n}\circ\pi_{\Gamma(B)}$ and
$\pi_{B}\circ f^{-1}$ is $\mathcal U_n$-close to $\displaystyle\phi_{ni_n}\circ\pi_{\Gamma(B)}$
for each $n$. The last condition implies that the sequence $\displaystyle\{\varphi_{ni_n}\}_{n\geq 1}$ uniformly converges to a map $f_{\Gamma(B)}\in C(X_{\Gamma(B)},X_B)$ and $\pi_{B}\circ f=f_{\Gamma(B)}\circ\pi_{\Gamma(B)}$. The sequence
$\displaystyle\{\phi_{ni_n}\}_{n\geq 1}$ also converges uniformly to a map $g_{\Gamma(B)}\in C(X_{\Gamma(B)},X_B)$ and
$\pi_{B}\circ f^{-1}=g_{\Gamma(B)}\circ\pi_{\Gamma(B)}$.

\begin{claim}\label{2}
For every countable set $B\subset A$ there is a countable set $\Lambda(B)\subset A$ containing $B$ such that for every $f\in K$ there exist
homeomorphisms
$f_{\Lambda(B)},g_{\Lambda(B)}\in \mathcal H(X_{\Lambda(B)})$ with $\pi_{\Lambda(B)}\circ f=f_{\Lambda(B)}\circ\pi_{\Lambda(B)}$ and
$\pi_{\Lambda(B)}\circ f^{-1}=g_{\Lambda(B)}\circ\pi_{\Lambda(B)}$.
\end{claim}
Indeed, using the notations from Claim \ref{1}, we construct an increasing sequence $B(n)$ of countable subsets of $A$ such that $B(0)=B$ and
$B(n)=\Gamma(B(n-1))$ for every $n\geq 1$. Let $\Lambda(B)=\bigcup_{n\geq 1}B(n)$. Then for every $f\in K$ there exist sequences
$\{f_n\}$ and $\{g_n\}$ satisfying the following conditions:
\begin{itemize}
\item $f_n,g_n\in C(X_{B(n)},X_{B(n-1)})$;
\item $\pi_{B(n-1)}\circ f=f_n\circ\pi_{B(n)}$ and $\pi_{B(n-1)}\circ f^{-1}=g_n\circ\pi_{B(n)}$.
\end{itemize}
The last two conditions imply that if $f\in K$, then for every $x,y\in X$ with $\pi_{\Lambda(B)}(x)=\pi_{\Lambda(B)}(y)$ we have
$\pi_{\Lambda(B)}(f(x))=\pi_{\Lambda(B)}(f(y))$ and $\pi_{\Lambda(B)}(f^{-1}(x))=\pi_{\Lambda(B)}(f^{-1}(y))$. Therefore, there exist homeomorphisms
$f_{\Lambda(B)},g_{\Lambda(B)}\in \mathcal H(X_{\Lambda(B)})$ satisfying the required conditions.

Now, for every $\alpha<\omega(\tau)$ take a countable set $\Lambda(\alpha)\subset A$ satisfying the hypotheses of Claim \ref{2} with  $B=\{\alpha\}$.
Let $A(\alpha)=\bigcup_{\gamma<\alpha}\Lambda(\gamma)$ if $\alpha$ is a limit ordinal, and $A(\alpha)=A(\alpha-1)\cup\Lambda(\alpha)$ otherwise. Since for every $\alpha\in A$ and $f\in K$ there exist $f_{\Lambda(\alpha)},g_{\Lambda(\alpha)}\in \mathcal H(X_{\Lambda(\alpha)})$ with $\pi_{\Lambda(\alpha)}\circ f=f_{\Lambda(\alpha)}\circ\pi_{\Lambda(\alpha)}$ and
$\pi_{\Lambda(\alpha)}\circ f^{-1}=g_{\Lambda(\alpha)}\circ\pi_{\Lambda(\alpha)}$, we have $\pi_{A(\alpha)}(f(x))=\pi_{A(\alpha)}(f(y))$ and
$\pi_{A(\alpha)}(f^{-1}(x))=\pi_{A(\alpha)}(f^{-1}(y))$ for any pair $x,y\in X$ with $\pi_{A(\alpha)}(x)=\pi_{A(\alpha)}(y)$. This  yields a homeomorphism $f_\alpha\in\mathcal H(X_{A(\alpha)})$ such that $\pi_{A(\alpha)}\circ f=f_\alpha\circ\pi_{A(\alpha)}$.
\end{proof}

Next theorem is an analogue of Mednikov's result \cite[Corollary 3]{m} stating that $\mathcal H([0,1]^A)$ is an absolute extensor for compact spaces.
\begin{thm}
Let $X=\prod_{\alpha\in A}X_\alpha$ be a product of compact metric spaces. Then $\mathcal H(X)$ is an absolute extensor
for zero-dimensional compact spaces.
\end{thm}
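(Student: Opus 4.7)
Fix a zero-dimensional compactum $Z$, a closed subset $Z_0\subset Z$, and a continuous map $f:Z_0\to\mathcal H(X)$. The plan is to produce a continuous extension $\widetilde f:Z\to\mathcal H(X)$ by reducing the single, uncountable-coordinate extension problem to a transfinite chain of countable-coordinate lifting problems via Proposition \ref{hom}, and then running a transfinite induction.

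Since $Z$ is compact, $K=f(Z_0)$ is a compact, hence Lindel\"of, subset of $\mathcal H(X)$. Proposition \ref{hom} therefore supplies a covering $\{A(\alpha):\alpha<\omega(\tau)\}$ of $A$ with countable successor differences, together with a well-defined assignment $p_\alpha:K\to\mathcal H(X_{A(\alpha)})$ sending $h$ to its unique factorization $h_\alpha$ with $\pi_{A(\alpha)}\circ h=h_\alpha\circ\pi_{A(\alpha)}$. A short argument in the compact-open topology shows that each $p_\alpha$ is continuous, so $F_\alpha:=p_\alpha\circ f:Z_0\to\mathcal H(X_{A(\alpha)})$ is continuous, and the family $\{F_\alpha\}_\alpha$ is compatible with the bonding projections $\pi^\alpha_\gamma:X_{A(\alpha)}\to X_{A(\gamma)}$ for $\gamma<\alpha$.

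I would then extend this family by transfinite recursion to a compatible family $\{\widetilde F_\alpha:Z\to\mathcal H(X_{A(\alpha)})\}$. At a limit stage, compatibility automatically determines $\widetilde F_\alpha(z)$ as the homeomorphism of $X_{A(\alpha)}=\varprojlim_{\gamma<\alpha}X_{A(\gamma)}$ induced by the already-constructed $\widetilde F_\gamma(z)$, and continuity in $z$ is inherited from the $\widetilde F_\gamma$. At a successor stage $\alpha\mapsto\alpha+1$ the set $\Delta:=A(\alpha+1)\setminus A(\alpha)$ is countable, so $X_{A(\alpha+1)}\cong X_{A(\alpha)}\times X_\Delta$ with $X_\Delta$ a compact metric space, and the problem reduces to extending over all of $Z$ the $Z_0$-defined family of homeomorphisms of $X_{A(\alpha+1)}$ that cover the already-extended $\widetilde F_\alpha(z)$. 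The initial step $\alpha=0$ is of the same kind, with $X_{A(0)}$ a compact metric space.

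The main obstacle is precisely this countable lifting step. A homeomorphism of $X_{A(\alpha+1)}$ covering a fixed $g\in\mathcal H(X_{A(\alpha)})$ has the form $(x,y)\mapsto(g(x),\psi(x,y))$ with $\psi(x,\,\cdot\,)\in\mathcal H(X_\Delta)$ depending continuously on $x$, so the fibre is naturally homeomorphic to $C(X_{A(\alpha)},\mathcal H(X_\Delta))$, a space that inherits good extension properties from the Polish structure of $\mathcal H(X_\Delta)$. To carry out the extension I would follow Mednikov's scheme from \cite{m} quite directly: represent $X_\Delta$ as an inverse limit of finite metric compacta, choose a decreasing sequence of open covers as in the proof of Proposition \ref{hom}, build approximate lifts level by level in the compact-open topology using the 0-dimensionality of $Z$ to pick coherent clopen refinements, and pass to a uniform limit, in the same spirit as the uniform-convergence argument used in the proofs of Claims \ref{1} and \ref{2}. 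Patching the maps $\widetilde F_\alpha$ for $\alpha<\omega(\tau)$ yields, for every $z\in Z$, a homeomorphism $\widetilde f(z)$ of $X=\varprojlim X_{A(\alpha)}$; continuity of $z\mapsto\widetilde f(z)$ into $\mathcal H(X)$ with the compact-open topology is then an inverse-limit consequence of the continuity of the individual $\widetilde F_\alpha$.
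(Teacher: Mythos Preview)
Your overall architecture matches the paper's: apply Proposition~\ref{hom} to the compact image $K=f(Z_0)$ to get the filtration $\{A(\alpha)\}$, and reduce the extension problem to a family of countable-difference steps whose fibre is $C\bigl(X_{A(\alpha)},\mathcal H(X_{A(\alpha+1)\setminus A(\alpha)})\bigr)$. The paper packages this slightly differently---it observes that $\mathcal H_\xi(X)$ is literally \emph{homeomorphic} to the product $\mathcal H(X_{A(0)})\times\prod_{\alpha}C\bigl(X_{A(\alpha)},\mathcal H(X_{A(\alpha+1)\setminus A(\alpha)})\bigr)$ and then shows each factor is $\mathrm{AE}[0]$---but that is equivalent to your transfinite induction, and your limit-stage handling is fine.

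The real divergence, and the gap, is in how you dispatch the successor step. The paper's argument is short and purely soft: each factor is a \emph{complete metric} space (Polish for $\mathcal H(X_{A(0)})$, of weight $\le\tau$ for $C(X_{A(\alpha)},\mathcal H(X_\Delta))$), so one embeds it as a closed subset of $l_2$ (resp.\ $l_2(\tau)$), extends there, factors the resulting map through a zero-dimensional metric compactum, and retracts onto the image of $Z_0$. No structure of $X_\Delta$ beyond its being compact metric is used. Your proposed route instead tries to ``represent $X_\Delta$ as an inverse limit of finite metric compacta'' and build approximate lifts. That representation is only available when $X_\Delta$ is zero-dimensional, whereas the theorem concerns arbitrary compact metric factors $X_\alpha$; for, say, $X_\Delta=[0,1]$ your scheme does not get started. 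Even in the zero-dimensional case the passage from finite-stage approximations to an actual limit in $\mathcal H(X_\Delta)$ is not the uniform-convergence argument of Claims~\ref{1}--\ref{2} (those manufacture a single factorizing map, not an extension over $Z$), so this part would need substantial additional work. You were in fact one line away from the paper's proof when you noted that the fibre ``inherits good extension properties from the Polish structure of $\mathcal H(X_\Delta)$''; following that hint---complete metrizability $\Rightarrow$ $\mathrm{AE}[0]$ via factorization through a zero-dimensional metric compactum and retraction---is exactly what is needed, and it replaces the entire approximation scheme.
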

\begin{proof}
Suppose $Y$ is a 0-dimensional compact space and $g:P\to \mathcal H(X)$ be a map, where $P$ is closed in $Y$. If $A$ is countable, then
$\mathcal H(X)$ is a complete separable metric space. So, we can consider $\mathcal H(X)$ as a closed subset of $l_2$ and find a continuous extension $\widetilde g:Y\to l_2$ of $g$. By the well-known factorization theorem, there is a metric 0-dimensional compactum $Y_0$ and a maps $g_0:Y_0\to l_2$ and $h:Y\to Y_0$ with $\widetilde g=g_0\circ h$. Then $P_0=h(P)$ is a closed subset of $Y_0$. Since $Y_0$ is $0$-dimensional, there is a retraction $r:Y_0\to P_0$. Finally, the composition $(g_0|P_0)\circ r\circ h$ is a map from $Y$ to $\mathcal H(X)$ extending $g$.

Assume $A$ is an uncountable set of cardinality $\tau$. Then $A$ can be covered by a family $\xi=\{A(\alpha):\alpha\in\omega(\tau)\}$ satisfying the hypotheses of Proposition \ref{hom} with $K=g(P)$.
Then for every $\alpha\in\omega(\tau)$ and $f\in K$ we have
\begin{itemize}
\item[(*)] $\pi^{A(\alpha+1)}_{A(\alpha)}\circ f_{\alpha+1}=f_\alpha\circ\pi^{A(\alpha+1)}_{A(\alpha)}$.
\end{itemize}
Denote by $\mathcal H_\xi(X)$ the subspace of $\mathcal H(X)$ consisting of all $f$ with the following property: For every $\alpha$ there is
$f_\alpha\in\mathcal H(X_{A(\alpha)})$ such that $f_\alpha$ and $f_{\alpha+1}$ satisfy $(*)$. Therefore, $K\subset \mathcal H_\xi(X)$. So, it suffice to show that $\mathcal H_\xi(X)$ is an absolute extensor for 0-dimensional compacta.

Since
$X_{A(\alpha+1)}=X_{A(\alpha)}\times X_{A(\alpha+1)\backslash A(\alpha)}$, the map $f_{\alpha+1}$ is of the form
$f_{\alpha+1}(x,y)=(f_{\alpha}(x),g(x,y))$, where $x\in X_{A(\alpha)}$ and $y\in X_{A(\alpha+1)\backslash A(\alpha)}$ and $g$ is a map
from $X_{A(\alpha+1)}$ into $X_{A(\alpha+1)\backslash A(\alpha)}$ such that for any $x\in X_{A(\alpha)}$ the map
$\varphi_g(x)$, $\varphi_g(x)(y)=g(x,y)$, is a homeomorphism on $X_{A(\alpha)\backslash A(\alpha)}$. Therefore, by \cite[Theorem 3.4.9]{en},
the correspondence $\varphi_g\leftrightarrow g$ is a homeomorphism between $C(X_{A(\alpha)},\mathcal H(X_{A(\alpha)\backslash A(\alpha)}))$ and
the subset of $C(X_{A(\alpha+1)},X_{A(\alpha+1)\backslash A(\alpha)})$ consisting of all $g$ such that for each $x\in X_{A(\alpha)}$ the map $\varphi_g(x)$ belongs to $\mathcal H(X_{A(\alpha)\backslash A(\alpha)})$. Hence, the correspondence $(f_\alpha,\varphi_g)\leftrightarrow f_{\alpha+1}$ provides a homeomorphism between the spaces $C(X_{A(\alpha)},\mathcal H(X_{A(\alpha)\backslash A(\alpha)}))$ and $\mathcal H_\alpha(X_{A(\alpha+1)})$, where $\mathcal H_\alpha(X_{A(\alpha+1)})$ consists of all homeomorphisms $f_{\alpha+1}$ on $X_{A(\alpha+1)}$ satisfying equality $(*)$. This means that there is one-to-one correspondence between $\mathcal H_\xi(X)$ and the product
$\mathcal H(X_{A(0)})\times\Pi_{\alpha<\omega(\tau)}C(X_{A(\alpha)},\mathcal H(X_{A(\alpha+1)\backslash A(\alpha)}))$. This correspondence is a homeomorphism when all function spaces carry the compact-open topology.

It remains to show that each multiple in this product is an absolute extensor for 0-dimensional compacta. This is true for $\mathcal H(X_{A(0)})$ because
$A(0)$ is countable. Let's prove that each $C(X_{A(\alpha)},\mathcal H(X_{A(\alpha+1)\backslash A(\alpha)}))$ is also an absolute extensor for 0--dimensional compacta. To this end,
take a pair $L\subset Z$ of 0-dimensional compacta and a map
$\theta$ from $L$ to $C(X_{A(\alpha)},\mathcal H(X_{A(\alpha+1)\backslash A(\alpha)}))$. Since $\mathcal H(X_{A(\alpha+1)\backslash A(\alpha)})$ is a separable complete metric space and $X_{A(\alpha)}$ is a compactum of weight $\leq\tau$, $C(X_{A(\alpha)},\mathcal H(X_{A(\alpha+1)\backslash A(\alpha)}))$ is a complete metric space of weight $\leq\tau$. So, $C(X_{A(\alpha)},\mathcal H(X_{A(\alpha+1)\backslash A(\alpha)}))$ can be embedded in $l_2(\tau)$ and $\theta$ can be extended to a map $\eta:Z\to l_2(\tau)$. Because the image
$\eta(Z)$ is a metric compactum, we can repeat the arguments from the first paragraph of our proof and obtain an extension
$\widetilde\theta:Z\to C(X_{A(\alpha)},\mathcal H(X_{A(\alpha+1)\backslash A(\alpha)}))$ of $\theta$.
\end{proof}

Everywhere below by $\mathfrak{C}$ we denote the Cantor set.
\begin{cor}
Let $P$ and $K$ be proper closed subsets of $\mathfrak{C}^A$ and $f$ be a homeomorphism between $P$ and $K$. Suppose there exist a proper subset $B\subset A$ and a homeomorphism $f_B$ between $P_B$ and $K_B$ such that
\begin{itemize}
\item $P=P_{B}\times\mathfrak{C}^{A\backslash B}$ and $K=K_{B}\times\mathfrak{C}^{A\backslash B}$;
\item $f_B\circ\pi_B=\pi_B\circ f$;
\item $f_B$ can be extended to a homeomorphism $\widetilde f_B\in\mathcal H(\mathfrak{C}^B)$;
\end{itemize}
Then $f$ can be extended to a homeomorphism $\widetilde f\in\mathcal H(\mathfrak{C}^A)$.
\end{cor}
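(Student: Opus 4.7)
The plan is to reduce the extension problem to a direct application of Theorem 3.2. Since $P=P_B\times\mathfrak{C}^{A\backslash B}$ and $K=K_B\times\mathfrak{C}^{A\backslash B}$, and $\pi_B\circ f=f_B\circ\pi_B$, the homeomorphism $f$ sends each fiber $\{x\}\times\mathfrak{C}^{A\backslash B}$ onto the fiber $\{f_B(x)\}\times\mathfrak{C}^{A\backslash B}$. Consequently $f$ has the form $f(x,y)=(f_B(x),h(x,y))$ for a continuous map $h\colon P_B\times\mathfrak{C}^{A\backslash B}\to\mathfrak{C}^{A\backslash B}$ with the property that, for every fixed $x\in P_B$, the partial map $h_x=h(x,\cdot)$ is a homeomorphism of $\mathfrak{C}^{A\backslash B}$.

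Next I would invoke the exponential law (\cite[Theorem 3.4.9]{en}) to identify $h$ with a continuous map $\varphi\colon P_B\to\mathcal{H}(\mathfrak{C}^{A\backslash B})$, $\varphi(x)(y)=h(x,y)$. The set $P_B=\pi_B(P)$ is closed in $\mathfrak{C}^B$ because $\pi_B$ is closed on compacta. Since $\mathfrak{C}^B$ is a $0$-dimensional compactum, Theorem 3.2 applied to the space $\mathcal{H}(\mathfrak{C}^{A\backslash B})$ yields a continuous extension $\widetilde{\varphi}\colon\mathfrak{C}^B\to\mathcal{H}(\mathfrak{C}^{A\backslash B})$ of $\varphi$.

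Now define $\widetilde f\colon\mathfrak{C}^A\to\mathfrak{C}^A$, under the identification $\mathfrak{C}^A=\mathfrak{C}^B\times\mathfrak{C}^{A\backslash B}$, by
\[
\widetilde f(x,y)=\bigl(\widetilde f_B(x),\,\widetilde{\varphi}(x)(y)\bigr).
\]
Continuity of $\widetilde f$ follows from another application of the exponential law. Since $\widetilde f_B\in\mathcal{H}(\mathfrak{C}^B)$ and each $\widetilde{\varphi}(x)$ is a homeomorphism of $\mathfrak{C}^{A\backslash B}$, the map $\widetilde f$ is bijective, with set-theoretic inverse $(x',y')\mapsto\bigl(\widetilde f_B^{-1}(x'),\widetilde{\varphi}(\widetilde f_B^{-1}(x'))^{-1}(y')\bigr)$; a continuous bijection between compact Hausdorff spaces is automatically a homeomorphism. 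Finally, for $(x,y)\in P=P_B\times\mathfrak{C}^{A\backslash B}$ one has $\widetilde f(x,y)=(f_B(x),\varphi(x)(y))=(f_B(x),h(x,y))=f(x,y)$, so $\widetilde f$ extends $f$.

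The principal obstacle is recognizing the correct reformulation: the product structure of $P$ and $K$ together with the intertwining relation $\pi_B\circ f=f_B\circ\pi_B$ converts the extension of $f$ into the extension of a map into a homeomorphism group, which is precisely what Theorem 3.2 supplies. Once this reduction is in place the rest of the argument is formal, with the only minor care needed in passing back and forth through the exponential law to keep continuity under the compact-open topology.
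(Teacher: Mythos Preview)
Your argument is correct and is essentially identical to the paper's proof: decompose $f$ fiberwise as $(f_B(x),h(x,y))$, pass via the exponential law to $\varphi\colon P_B\to\mathcal H(\mathfrak{C}^{A\setminus B})$, extend $\varphi$ over $\mathfrak{C}^B$ using the absolute-extensor theorem for $\mathcal H(X)$, and reassemble $\widetilde f=(\widetilde f_B,\widetilde\varphi(\cdot)(\cdot))$. The only slip is the label: in the paper's numbering the absolute-extensor result is Theorem~3.4 (Claims~3.2 and~3.3 share the theorem counter), not Theorem~3.2.
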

\begin{proof}
Since $f_B\circ\pi_B=\pi_B\circ f$, $f$ is of the form $f(x,y)=(f_B(x),h(x,y))$ with $(x,y)\in P_{B}\times\mathfrak{C}^{A\backslash B}$ such that for each $x\in P_{B}$ the map $\varphi_x$, defined by $\varphi_x(y)=h(x,y)$, belongs to
$\mathcal H(\mathfrak{C}^{A\backslash B})$. So, we have a map $\varphi:P_{B}\to\mathcal H(\mathfrak{C}^{A\backslash B})$, see \cite[Theorem 3.4.9]{en}. By Theorem 3.4, we can extend $\varphi$ to a map $\widetilde\varphi: \mathfrak{C}^B\to\mathcal H(\mathfrak{C}^{A\backslash B})$ and define $\widetilde h:\mathfrak{C}^A\to \mathfrak{C}^{A\backslash B}$,
$\widetilde h(x,y)=\widetilde\varphi(x)(y)$, where $(x,y)\in \mathfrak{C}^B\times \mathfrak{C}^{A\backslash B}$. Finally, $\widetilde f(x,y)=(\widetilde f_B,\widetilde h(x,y))$ provides a homeomorphism in $\mathcal H(\mathfrak{C}^A)$ extending $f$.
\end{proof}

\section{Extending homeomorphisms}
In this section we prove the following generalization of Theorem \ref{cube}.
\begin{thm}
Let $f$ be a homeomorphism between closed $\lambda$-negligible subsets $P$ and $K$  of $D^\tau$ with $\lambda\leq\tau$ such that $P=P^{(\lambda)}$ and $K=K^{(\lambda)}$. Then $f$ can be extended to a homeomorphism on $D^\tau$.
\end{thm}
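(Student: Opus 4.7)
The plan is to prove Theorem~4.1 by a transfinite induction along an increasing tower of $f$-admissible subsets of $A$, extending $f$ one countable block of coordinates at a time. The two main ingredients driving the induction are the metric Knaster--Reichbach theorem, applied to the nowhere-dense projected trace of $P$ on each new countable block, and Theorem~3.4, which is used to turn pointwise extensions into a continuous family.

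First, I would build a continuous chain $\{A(\alpha) : \alpha \le \omega(\tau)\}$ of $f$-admissible subsets of $A$ with $A(\omega(\tau)) = A$, $A(\alpha) = \bigcup_{\beta < \alpha} A(\beta)$ at limits, and, at successors, $I_\alpha := A(\alpha+1)\setminus A(\alpha)$ countable with both $\pi_{I_\alpha}(P)$ and $\pi_{I_\alpha}(K)$ nowhere dense in $D^{I_\alpha}$. This is obtained by iterating Corollary~2.3 and Proposition~2.1 transfinitely; the hypotheses that $P, K$ are $\lambda$-negligible and that $P = P^{(\lambda)}$, $K = K^{(\lambda)}$ are precisely what keep the negligibility/slice arguments of Section~2 alive at every stage of the iteration, and they force $|A(\alpha)| < \tau$ throughout.

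Next, I would construct by transfinite recursion a compatible family of homeomorphisms $\widetilde{f}_\alpha \in \mathcal{H}(D^{A(\alpha)})$ extending $f_{A(\alpha)} \colon P_{A(\alpha)} \to K_{A(\alpha)}$ and satisfying $\pi^{A(\alpha)}_{A(\beta)} \circ \widetilde{f}_\alpha = \widetilde{f}_\beta \circ \pi^{A(\alpha)}_{A(\beta)}$ for $\beta < \alpha$. Limit stages are automatic from compactness and compatibility via the induced inverse-limit homeomorphism. At a successor step, writing $D^{A(\alpha+1)} = D^{A(\alpha)} \times D^{I_\alpha}$ and using $f$-admissibility, $f_{A(\alpha+1)}$ takes the form $(x,y) \mapsto (\widetilde{f}_\alpha(x), \phi_x(y))$, where for each $x \in P_{A(\alpha)}$ the slice $P(x) = \{y : (x,y) \in P_{A(\alpha+1)}\}$ is contained in the nowhere dense set $\pi_{I_\alpha}(P)$, and $\phi_x \colon P(x) \to K(\widetilde{f}_\alpha(x))$ is a homeomorphism between closed nowhere dense subsets of the Cantor set $D^{I_\alpha}$. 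The classical Knaster--Reichbach theorem extends each $\phi_x$, and I would then define $\widetilde{f}_{\alpha+1}(x,y) = (\widetilde{f}_\alpha(x), \widetilde{\phi}(x)(y))$ from a continuous lift $\widetilde{\phi} \colon D^{A(\alpha)} \to \mathcal{H}(D^{I_\alpha})$.

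The hard part is the existence of such a continuous lift: Knaster--Reichbach produces noncanonical extensions slice by slice, so continuity in $x$ is not automatic. The plan is first to produce a continuous selection $\phi \colon P_{A(\alpha)} \to \mathcal{H}(D^{I_\alpha})$ picking an extension of $\phi_x$ at each $x$ (a parametric Knaster--Reichbach statement, which should follow from a careful reworking of the metric argument with values in $\mathcal{H}(D^{I_\alpha})$), and then to invoke Theorem~3.4 to extend $\phi$ from $P_{A(\alpha)}$ to all of $D^{A(\alpha)}$, yielding the required $\widetilde{\phi}$. With this in hand, $\widetilde{f} := \widetilde{f}_{\omega(\tau)}$ is the desired homeomorphism of $D^\tau$ extending $f$.
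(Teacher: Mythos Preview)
Your plan coincides with the paper's proof in the principal case $\lambda=\tau$: build an $f$-admissible continuous chain via Corollary~2.3, and at each successor step first produce a fiberwise extension over $P_{A(\alpha)}$ and then globalize using the absolute-extensor property of $\mathcal H(D^{I_\alpha})$. Two points, however, need attention.

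\smallskip
\textbf{The parametric Knaster--Reichbach step.} This is exactly Lemma~4.2, and the paper does \emph{not} obtain it by ``reworking the metric argument with parameters''. Instead it shows that the set-valued map
\[
x\longmapsto\Phi(x)=\{\,h\in\mathcal H(D^{I_\alpha}):h\text{ extends }\phi_x\,\}
\]
is lower semi-continuous (Knaster--Reichbach is used only pointwise, to verify $\Phi(x)\neq\varnothing$ and to build local witnesses inside prescribed clopen partitions), and then applies Michael's zero-dimensional selection theorem to get a continuous $\phi\colon P_{A(\alpha)}\to\mathcal H(D^{I_\alpha})$. Your phrasing suggests redoing the back-and-forth uniformly in $x$; that is harder than necessary, and you should replace it by the selection argument.

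\smallskip
\textbf{The case $\lambda<\tau$.} Your unified chain construction cannot be carried out as stated. If $P=P^{(\lambda)}$ with $\lambda<\tau$, then by Efimov's theorem $P=\pi_B^{-1}(P_B)$ for some $B\subset A$ with $|B|=\lambda$; hence for any countable block $I_\alpha$ disjoint from $B$ one has $\pi_{I_\alpha}(P)=D^{I_\alpha}$, which is not nowhere dense. Thus your successor condition on $I_\alpha$ must fail once the chain has absorbed $B$, and Corollary~2.3 (which requires $|A(\alpha)|<\lambda$, not merely $<\tau$, when $P$ is only $\lambda$-negligible) cannot be iterated $\omega(\tau)$ times. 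The paper handles this case separately: it chooses an $f$-admissible $B$ of cardinality $\lambda$ with $P=\pi_B^{-1}(P_B)$ and $K=\pi_B^{-1}(K_B)$, applies the already-proved case $\lambda=\tau$ inside $D^B$ to extend $f_B$ to $\widetilde f_B\in\mathcal H(D^B)$, and then uses Corollary~3.5 to pass from $\widetilde f_B$ to $\widetilde f\in\mathcal H(D^\tau)$. The hypotheses $P=P^{(\lambda)}$, $K=K^{(\lambda)}$ are used precisely for this reduction, not to ``keep Section~2 alive'' along an $\omega(\tau)$-length chain.
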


\begin{lem}
Let $X, Y$ be 0-dimensional paracompact spaces and $\mathfrak{C}$ be the Cantor set. Suppose $P'\subset X\times\mathfrak{C}$, $K'\subset Y\times\mathfrak{C}$ are closed sets such that $\pi_X(P')=X$, $\pi_{Y}(K')=Y$ and the sets
$\pi_{\mathfrak{C}}((\{x\}\times\mathfrak{C})\cap P')$ and $\pi_{\mathfrak{C}}((\{y\}\times\mathfrak{C})\cap K')$ are nowhere dense in $\mathfrak{C}$ for all $x\in X$ and $y\in Y$.
Let $f:P'\to K'$ and $g:X\to Y$ be homeomorphisms with $g\circ\pi_X=\pi_{Y}\circ f$.
Then $f$ can be extended to a homeomorphism
$\widetilde f:X\times\mathfrak{C}\to Y\times\mathfrak{C}$ such that $g\circ\pi_X=\pi_{Y}\circ\widetilde f$.
\end{lem}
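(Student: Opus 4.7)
The plan is to imitate the classical Knaster--Reichbach construction, but performed simultaneously in the Cantor factor over all of $X$ (and transported to $Y$ via $g$). Concretely, I would inductively build two sequences of clopen ``rectangular'' partitions $\mathcal{A}_n$ of $X\times\mathfrak{C}$ and $\mathcal{B}_n$ of $Y\times\mathfrak{C}$, together with a bijection $h_n\colon\mathcal{A}_n\to\mathcal{B}_n$, so that: each cell of $\mathcal{A}_n$ has the form $U\times V$, where $\{U\}$ forms a disjoint clopen partition of $X$ (available because a $0$-dimensional paracompact space is ultraparacompact) and $V$ is clopen in $\mathfrak{C}$ of $\diam<2^{-n}$; the bijection sends $U\times V$ to $g(U)\times V'$ for some clopen $V'\subset\mathfrak{C}$ of $\diam<2^{-n}$; $\mathcal{A}_{n+1}$ refines $\mathcal{A}_n$ and $h_{n+1}$ refines $h_n$; and, crucially, for every $p\in P'$, if $p\in A\in\mathcal{A}_n$ then $f(p)\in h_n(A)$. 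Then $\widetilde{f}(p)=\bigcap_n h_n(A_n(p))$, where $A_n(p)$ is the unique cell of $\mathcal{A}_n$ containing $p$, would be the required homeomorphism; the identity $g\circ\pi_X=\pi_Y\circ\widetilde{f}$ is automatic from the rectangular form of the cells.

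The inductive step is the heart of the matter. Fix $A=U\times V\in\mathcal{A}_n$ with $h_n(A)=g(U)\times V'$. For each fixed $x\in U$ the one-fiber Knaster--Reichbach argument, applied to the nowhere dense closed sets $\pi_{\mathfrak{C}}((\{x\}\times V)\cap P')\subset V$ and $\pi_{\mathfrak{C}}((\{g(x)\}\times V')\cap K')\subset V'$, yields a finite clopen refinement of $V$ into pieces of $\diam<2^{-(n+1)}$ and of $V'$ into the same number of pieces, such that the intersection pattern with $P'$ (resp.\ $K'$) matches under $f$; here nowhere-density is essential, because it lets one insert ``empty'' clopen slots to complete the bijection between pieces meeting $P'$ and pieces meeting $K'$. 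To make such a choice parametric, I would observe that the success of a particular finite combinatorial pattern of this sort is a clopen condition on $x\in U$: it is determined by whether the closed sets $P'$ and $K'$ meet specific clopen rectangles, and continuity of $f$ makes the matching locally constant in $x$. Because $U$ is $0$-dimensional paracompact (being clopen in $X$), hence ultraparacompact, one obtains a disjoint clopen refinement $\{U_\alpha\}$ of $U$ on each piece of which a single common partition of $V$ and $V'$ works. Doing this for every $A\in\mathcal{A}_n$ produces $\mathcal{A}_{n+1}$, $\mathcal{B}_{n+1}$, and $h_{n+1}$.

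The step I expect to be the main obstacle is precisely this simultaneous combinatorial matching: ensuring that, for each $A$, one can refine the Cantor factor with mesh $<2^{-(n+1)}$ while respecting the fiberwise action of $f$, and that the ensuing refinement of the $X$-coordinate can be chosen coherently across all cells of $\mathcal{A}_n$. Nowhere-density of the fibers on \emph{both} sides is what makes the combinatorial matching always solvable, and ultraparacompactness of $X$ is what promotes this pointwise solvability to a locally finite global refinement. Once the partitions are in place, standard inverse-limit arguments show that $\widetilde{f}(p)=\bigcap_n h_n(A_n(p))$ is a well-defined homeomorphism, it extends $f$ because of the compatibility property, and it satisfies $g\circ\pi_X=\pi_Y\circ\widetilde{f}$ because every cell of $\mathcal{A}_n$ is sent to a cell of $\mathcal{B}_n$ with $Y$-coordinate equal to the $g$-image of its $X$-coordinate.
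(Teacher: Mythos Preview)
Your direct back-and-forth construction is sound and does prove the lemma, but it is a genuinely different route from the paper's. The paper does not build the homeomorphism by an inductive sequence of clopen partitions; instead it defines the set-valued map $\Phi\colon X\rightsquigarrow\mathcal H(\mathfrak C)$, where $\Phi(x)$ consists of all self-homeomorphisms of $\mathfrak C$ extending the fiber map $f_x$, checks that each $\Phi(x)$ is nonempty (a single application of Knaster--Reichbach) and closed, proves $\Phi$ is lower semi-continuous, and then invokes Michael's zero-dimensional selection theorem to obtain a continuous $\phi\colon X\to\mathcal H(\mathfrak C)$ with $\widetilde f(x,c)=(g(x),\phi(x)(c))$. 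The lower semi-continuity argument in the paper is essentially one step of your induction (fix clopen partitions that work at $x^*$, observe they continue to work on a clopen neighborhood, then apply Knaster--Reichbach fiberwise); in effect the paper outsources the remaining induction to Michael's theorem, whereas you carry it out by hand. Your approach is more elementary and self-contained; the paper's is shorter once one is willing to quote the selection theorem.

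One small correction: your formula $\widetilde f(p)=\bigcap_n h_n(A_n(p))$ is not literally right, because you never shrink the $X$-projection of the cells (nor could you, since $X$ need not be metrizable), so this intersection is typically $(\bigcap_n g(U_n))\times\{c'\}$ rather than a single point. The map you actually construct is $\widetilde f(x,c)=(g(x),c')$ with $\{c'\}=\bigcap_n \pi_{\mathfrak C}\bigl(h_n(A_n(x,c))\bigr)$; continuity, bijectivity, the extension property, and the identity $g\circ\pi_X=\pi_Y\circ\widetilde f$ all follow exactly as you indicate once the formula is stated this way.
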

\begin{proof}
For any $x\in X$ let $\Phi(x)$ be the set of all $h\in\mathcal H(\mathfrak{C})$ such that $f(x,c)=(g(x),h(c))$ for all $c\in\pi_X^{-1}(x)\cap P'$. Since for every $x\in X$ the restriction $f|\pi_X^{-1}(x)\cap P'$ is a homeomorphism between the nowhere dense subsets $\pi_X^{-1}(x)\cap P'$ and $\pi_{Y}^{-1}(g(x))\cap K'$ of $\mathfrak{C}$, by \cite{kr}, there is $h_x\in\mathcal{H}(\mathfrak{C})$ extending $f|\pi_X^{-1}(x)\cap P'$. Hence,
$\Phi(x)\neq\varnothing$.
Moreover, the sets $\Phi(x)$ are closed in $\mathcal H(\mathfrak{C})$ equipped with the compact-open topology. So, we have a set-valued map
$\Phi:X\rightsquigarrow\mathcal H(\mathfrak{C})$. One can show that if $\Phi$ admits a continuous selection $\phi:X\to\mathcal H(\mathfrak{C})$,
then the map $\widetilde f:X\times\mathfrak{C}\to Y\times\mathfrak{C}$, defined by $\widetilde f(x,c)=(g(x),\phi(x)(c))$, is the required homeomorphism extending $f$. Therefore, according to Michael's \cite{em} zero-dimensional selection theorem, it suffices to show that $\Phi$ is lower semi-continuous.

To prove that, let $x^*\in X$ be a fixed point and $h^*\in\Phi(x^*)\cap W$, where $W$ is open in $\mathcal H(\mathfrak{C})$. We can assume that
$W$ is of the form $\{h\in\mathcal H(\mathfrak{C}): h(U_i)\subset V_i, i=1,2,..k\}$, where $\{U_i\}_{i=1}^k$ is a clopen disjoint cover of
$\{x^*\}\times\mathfrak{C}$ and $\{V_i\}_{i=1}^k$ is a disjoint clopen cover of $\{g(x^*)\}\times\mathfrak{C}$. We extend the sets $U_i$ and $V_i$ to clopen sets $\widetilde U_i\subset X\times\mathfrak{C}$ and $\widetilde V_i\subset Y\times\mathfrak{C}$ such that
\begin{itemize}
\item[(1)] $\widetilde U_i=O(x^*)\times U_i$ and $\widetilde V_i=g(O(x^*))\times V_i$, where $O(x^*)$ is a clopen neighborhood of $x^*$ in $X$;
\item[(2)] $O(x^*)$ can be chosen so small that $f(\widetilde U_i\cap P')\subset\widetilde V_i\cap K'$.
\end{itemize}
We are going to show that for every $x\in O(x^*)$ there exists $h_x\in\Phi(x)\cap W$. We fix such $x$ and observe that all sets
$\widetilde U_i(x)=\widetilde U_i\cap(\{x\}\times\mathfrak{C})$ and $\widetilde V_i(x)=\widetilde V_i\cap(\{g(x)\}\times\mathfrak{C})$ are compact and perfect. Moreover, $P_i'(x)=\widetilde U_i(x)\cap P'$ and  $K_i'(x)=\widetilde V_i\cap K'$ are nowhere dense sets in $\widetilde U_i(x)$
and $\widetilde V_i(x)$, respectively, and $f^x_i=f|(\widetilde U_i(x)\cap P')$ is a homeomorphism between $P_i'(x)$ and $K_i'(x)$ for every $i$.
Hence, by \cite{kr}, there exist homeomorphisms $\widetilde f^x_i:\widetilde U_i(x)\to\widetilde V_i(x)$ extending $f^x_i$. Because
$\{\widetilde U_i(x)\}_{i=1}^k$ and $\{\widetilde V_i(x)\}_{i=1}^k$ are disjoint clopen covers of $\pi_X^{-1}(x)$ and
$\pi_Y^{-1}(g(x))$, respectively, the homeomorphisms $\widetilde f^x_i$, $i=1,2,..,k$, provide a homeomorphism $h_x'$ between $\pi_X^{-1}(x)$ and
$\pi_Y^{-1}(g(x))$ extending $f|\pi_X^{-1}(x)\cap P'$. Then the equality $h_x(c)=h_x'(x,c)$, $c\in\mathfrak{C})$, defines a homeomorphism
from $\mathcal{H}(\mathfrak{C})$ with $h_x\in\Phi(x)\cap W$. Therefore, $\Phi$ is lower semi-continuous.
\end{proof}

{\em Proof of Theorem $4.1$.}
We identify $D^\tau$ with $D^A$, where $A$ is a set of cardinality $\tau$.
We first consider the case when $\lambda=\tau$.
We already observed that the theorem is true when $A$ is countable.
So, let $A=\{\alpha:\alpha<\omega(\tau)\}$ be uncountable and consider two copies of $D^A=\mathfrak{C}^A$, one containing $P$ and the other containing $K$. Each of the open sets $\mathfrak{C}^A\setminus P$ and $\mathfrak{C}^A\setminus K$ contains dense functionally open sets, say $V(P)$ and $V(K)$, and choose a countable set $C\subset A$ such that $\pi_C^{-1}(\pi_C(V(P)))=V(P)$ and $\pi_C^{-1}(\pi_C(V(K)))=V(K)$. Hence, $P_B$ and $K_B$ are nowhere dense subsets of $\mathfrak{C}^B$ for any set $B\subset A$ containing $C$. Next,
using Corollary 2.3, we can cover $A$ by an increasing family $\{A(\alpha):\alpha<\omega(\tau)\}$ and find homeomorphisms $f_\alpha:P_{A(\alpha}\to K_{A(\alpha)}$ satisfying the following conditions:
\begin{itemize}
\item[(3)] $A_\alpha=\bigcup_{\beta<\alpha}A(\beta)$ if $\alpha$ is a limit ordinal and the cardinality of each $A(\alpha)$ is less than $\tau$;
\item[(4)] $A(\alpha+1)\backslash A(\alpha)$ is countable and $C\subset A(\alpha)$ for all $\alpha$;
\item[(5)] $\displaystyle\pi_{A(\alpha)}\circ f=f_\alpha\circ\pi_{A(\alpha)}$;
\item[(6)] $\pi_{A(\alpha+1)\backslash A(\alpha)}(P)$ and $\pi_{A(\alpha+1)\backslash A(\alpha)}(K)$ are nowhere dense sets in $\mathfrak{C}^{A(\alpha+1)\backslash A(\alpha)}$.
\end{itemize}

It remains to prove that each $f_\alpha$ can be extended to a homeomorphism $\widetilde f_\alpha\in\mathcal H(\mathfrak{C}^{A(\alpha)})$ such that
\begin{itemize}
\item[(7)] $\displaystyle\pi_{A(\alpha)}^{A(\alpha+1)}\circ\widetilde f_{\alpha+1}=\widetilde
f_\alpha\circ\pi_{A(\alpha)}^{A(\alpha+1)}$.
\end{itemize}
The proof is by transfinite induction.
The first extension $\widetilde f_1$ exists by \cite{kr} because $P_{A(1)}$ and $K_{A(1)}$ are nowhere dense subsets of $\mathfrak{C}^{A(1)}$.
If $\widetilde f_\alpha$ is already defined for all $\alpha<\beta$, where $\beta$ is a limit ordinal, then item (3) implies the existence of
$\widetilde f_\beta$. Therefore, we need only to define $\widetilde f_{\alpha+1}$ provided $\widetilde f_\alpha$ exists.

To this end, consider
the spaces $X=P_{A(\alpha)}$ and $Y=K_{A(\alpha)}$, the sets
$P'=P_{A(\alpha+1)}\subset X\times\mathfrak{C}^{A(\alpha+1)\backslash A(\alpha)}$,
$K'=K_{A(\alpha+1)}\subset Y\times\mathfrak{C}^{A(\alpha+1)\backslash A(\alpha)}$ and the homeomorphisms $f_{\alpha+1}$, $f_\alpha$.
For any $x\in X$ and $y\in Y$ let
$$P'(x)=P'\cap(\{x\}\times\mathfrak{C}^{A(\alpha+1)\backslash A(\alpha)}){~} \mbox{and}{~} K'(y)=K'\cap(\{y\}\times\mathfrak{C}^{A(\alpha+1)\backslash A(\alpha)}).$$

Item (6) yields that $\pi_{A(\alpha+1)\backslash A(\alpha)}(P'(x))$ and $\pi_{A(\alpha+1)\backslash A(\alpha)}(K'(y))$ are nowhere dense sets in $\mathfrak{C}^{A(\alpha+1)\backslash A(\alpha)}$ for every $x\in P_{A(\alpha)}$ and $y\in K_{A(\alpha)}$. Therefore, by Lemma 4.2, the homeomorphism $f_{\alpha+1}$ can be extended to a fiberwise homeomorphism
$$f_{\alpha+1}':P_{A(\alpha)}\times\mathfrak{C}^{A(\alpha+1)\backslash A(\alpha)}\to K_{A(\alpha)}\times\mathfrak{C}^{A(\alpha+1)\backslash A(\alpha)}.$$ Finally, by Corollary 3.5, there is a homeomorphism $\widetilde f_{\alpha+1}\in\mathcal H(\mathfrak{C}^{A(\alpha+1)})$ satisfying
condition (7).

Suppose now that $P$ and $K$ are $\lambda$-negligible in $\mathfrak{C}^A$ and  $\lambda<\tau$. Then $A$ is uncountable and $P=P^{(\lambda)}$, $K=K^{(\lambda)}$.  So, by \cite{ef}, there is a set $B\subset A$ of cardinality $\lambda$ with $P=\pi_B^{-1}(P_B)$ and $K=\pi_B^{-1}(K_B)$. According to Proposition 2.1, we can assume that $B$ is $f$-admissible. So, there exists a homeomorphism $f_B:P_B\to K_B$ with $\pi_B\circ f=f_B\circ\pi_B$. Moreover, the sets $P_B$ and $K_B$ are $\lambda$-negligible in $\mathfrak{C}^B$. Hence, by the already established case of Theorem 4.1, $f_B$ can be extended to a homeomorphism $\widetilde f_B\in\mathcal H(\mathfrak{C}^B)$. Then, Corollary 3.5 implies the existence of a homeomorphism
$\widetilde f\in\mathcal H(\mathfrak{C}^A)$ extending $f$. $\Box$


\end{document}